\newtheorem{Theorem}{Theorem}[section]
\newtheorem{Proposition}{Proposition}[section]
\newtheorem{Remark}{Remark}[section]
\newtheorem{example}{Example}[section]
\newproof{proof}{Proof}
\newproof{pot}{Proof of Theorem \ref{thm2}}
\def\r2n{\mathbb{R}^{2n}}
\def\r2n2n{\mathbb{R}^{2n\times 2n}}
\newcommand{\bb}{\begin{bmatrix}}
\newcommand{\eb}{\end{bmatrix}}
\begin{document}

\date{}
\begin{frontmatter}
\title{
On the solution of the linear matrix equation $X=Af(X)B+C$
\tnoteref{t1}}
%
\author{Chun-Yueh Chiang\corref{cor1}\fnref{fn1}}
\ead{chiang@nfu.edu.tw}
\address{Center for General Education, National Formosa
University, Huwei 632, Taiwan.}


\cortext[cor1]{Corresponding author}
\fntext[fn1]{The author was supported  by the National Science Council of Taiwan under grant
NSC102-2115-M-150-002.}

\date{ }

\begin{abstract}
In this paper, we derive a formula to compute the solution of the linear matrix equation $X=Af(X)B+C$ via finding any solution of a specific Stein matrix equation $\mathcal{X}=\mathcal{A} \mathcal{X} \mathcal{B}+\mathcal{C}$, where the linear (or anti-linear) matrix operator $f$ is period-$n$. According to this formula, we should pay much attention to solve the Stein matrix equation from recently famous numerical methods. For instance, Smith-type iterations, Bartels-Stewart algorithm, and etc.. Moreover, this transformation is used to provide necessary and sufficient conditions of the solvable of the linear matrix equation.
On the other hand, it can be proven that the general solution of the linear matrix equation can be presented by the general solution of the Stein matrix equation. The necessary condition of the uniquely solvable of the linear matrix equation is developed. It is shown that several representations of this formula are coincident. Some examples are presented to illustrate and explain our results.
\end{abstract}

\begin{keyword}
Linear matrix equation, formulation, periodic function, Stein matrix equation
\\
\MSC 15A06 \sep 15A24\sep 65F10\sep 65F05
\end{keyword}
\end{frontmatter}

\section{Introduction}
The paper concerns with a general class of linear matrix equation of the form
\begin{align}\label{eq}
X=Af(X)B+C,
\end{align}
where $A,\,B \in\mathbb{C}^{m\times m}$ are known matrices, and $X\in \mathbb{C}^{m\times m}$ is an unknown matrix to be determined. The matrix operator $f:\mathbb{C}^{m\times m}\rightarrow \mathbb{C}^{m\times m}$ is satisfying the following properties.
\begin{itemize}
\item[1.] Linear operator or anti-linear operator: $f(AB+CD)=f(A)f(B)+f(C)f(D)$ or $f(B)f(A)+f(D)f(C)$ for all $A,B,C$ and $D\in\mathbb{C}^{m\times m}.$
\item[2.]Period-$n$: For all $A\in\mathbb{C}^{m\times m}$, $i=n$ is the least positive integer such that $f^{(i)}(A)=A$ .
\end{itemize}
In \cite{Zhou2011}, Zhou et al. consider the  solvability, existence of unique solution,
closed-form solution of the following three types of linear matrix equation.
\begin{itemize}
\item[1.]$X=AX^\top B+C$, $f$ is the transpose operator which is an anti-linear and period-$2$ operator.
\item[2.]$X=AX^H B+C$, $f$ is the Hermitian operator which is an anti-linear and period-$2$ operator.
\item[3.]$X=A\overline{X} B+C$, $f$ is the conjugate operator which is a linear and period-$2$ operator.
\end{itemize}
Linear matrix equations of the above three types are encountered in many applications of control and engineering
problems \cite{Zhou2011,Chiang2012}. To solve these matrix equations, the first idea is to transform  these matrix equations into the standard Stein matrix equation
\begin{align*}
\mathcal{X}=\mathcal{A} \mathcal{X} \mathcal{B}+\mathcal{C},
\end{align*}
where $\mathcal{X}$ is an unknown matrix to be determined, and $\mathcal{A}=Af(B),\,\mathcal{B}=f(A)B,\,\mathcal{C}=C+Af(C)B$ in type 1 and type 2, and $\mathcal{A}=Af(A),\,\mathcal{B}=f(B)B,\,\mathcal{C}=C+Af(C)B$ in type 3.
Quoted from \cite{Zhou2011}, the authors give a beauty formulation
\begin{align}\label{zhou}
{X}=\dfrac{1}{2}\left(\mathcal{X}+Af(\mathcal{X})B+C\right)
\end{align}
to finding a solution $X$ of the original Eq.~\eqref{eq} through the solution $\mathcal{X}$ of the Stein matrix equation. Moreover, the general solution of the original matrix equation can be obtained from the general solution of the Stein matrix equation.

We give an insightful observation about the properties of the matrix operator $f$ in the above three types, the formulation \eqref{zhou} arises in the periodically and (anti-)linearity of the matrix operator $f$. It is interesting to ask whether the formulation \eqref{zhou} can be established in a generalized case? Roughly speaking, if the matrix operator $f$ is a (anti-)linear and period-$n$ matrix operator. We are going to extend the recent results and hope to draw further attention to derive a formulation like  \eqref{zhou} in the case of \cite{Zhou2011} and thus find the general solution of Eq.~\eqref{eq}.

To advance our research we organize this paper as follows. Inspired by existing work\cite{Zhou2011}, after a subsequent computations and observations, the transformation between the general solution of new Stein matrix equation and the general solution of original matrix equation is obtained in Section 2. Beginning in Section 3, we formulate the necessary conditions for the existence of the solution of \eqref{eq} directly by means of the conditions for the existence of the solution of new Stein matrix equation. Also, a close-form of the unique solution of Eq.~\eqref{eq} is shown under a regularity assumption on the coefficients $A$, $B$ and $C$. Two expressions of our formulation are received by way of apply for our result to some equivalent equations of Eq.~\eqref{eq}. In the remainder of this paper, we offer three examples to demonstrate our consequence in Section~5. In Section~6, we provide a concluding summary.

\section{Formulae}
In order to formalize our discussion, let us start with some notations. $f^{(i)}$ denote the composition of $f$ with itself $i$ times and  $f^{(0)}\equiv I$ is the identity map. $A\otimes B$ be the Kronecker product of matrices $A$ and $B$. $\mbox{vec}(A)$ stacks the columns of $A$ into a column vector and the vector operator $\mbox{vec}(.)$ is invertible. For a matrix $A$, $\sigma(A)$ is the set of all eigenvalues of $A$ and $\rho(A)$ is denoted by the spectral radius of $A$. For the sake of convenience, we without loss of generality assume that $f$ is linear through this paper.

First, for ease of notations. Let the specific operator $G$ involving a integer parameter $i\geq 1$ be defined as
$$G_i(X)=\prod_{j=1}^i f^{(j-1)}(A) \left[f^{(i)}(X)\right]\prod_{j=1}^i f^{(i-j)}(B)$$
for arbitrary $m$-square matrix $X$. In particular, we also define $G_0(X)=X$. It is easy to see that $G_i(G_j(X))=G_j(G_i(X))=G_{i+j}(X)$ for any two positive integers $i$ and $j$.
On account of the periodically of the operator $f$, Eq.~\eqref{eq} can be transformed into a auxiliary  Stein matrix equation with respect to $\mathcal{X}$ duo to a composition of $n$ copies of the right hand side of Eq.~\eqref{eq},
\begin{align}\label{Stein}
\mathcal{X}= \mathcal{A}\mathcal{X}\mathcal{B}+\mathcal{C},
\end{align}
where the coefficient matrices $\mathcal{A},\mathcal{B}$ and $\mathcal{C}$ are defined as
\begin{subequations}\label{AABBCC}
\begin{align}
\mathcal{A}&=\mathcal{A}_n=\prod_{i=1}^n f^{(i-1)}(A),\label{AA} \\
\mathcal{B}&=\mathcal{B}_n=\prod_{i=1}^n f^{(n-i)}(B),\label{BB}\\
\mathcal{C}&=\mathcal{C}_n=\sum\limits_{i=1}^n G_{i-1}(C).\label{CC}
\end{align}
\end{subequations}
It is obvious that $X$ solve the Stein matrix equation~\eqref{Stein} if $X$ is a solution of the linear matrix equation~\eqref{eq}. However, the converse does not necessarily hold. Let $A = -1$, $B = 1$ and matrix operator $f$ be the identity map in \eqref{eq}. It is clear that, the scalar equation $X=-X+C$ has a unique solution $X=\dfrac{C}{2}$. But, the Eq.~\eqref{Stein} becomes to the identity equation $X=X$, which has infinite solutions.

In this section, we are interested in the relationship between the set of the general solution of Eq.~\eqref{eq} and the set of general solution of Eq.~\eqref{Stein}. To simplify the discussion. Let $\mathcal{S}_1$ be the set of general solutions of Eq.~\eqref{eq} and $\mathcal{S}_2$ be the set of general solutions of Eq.~\eqref{Stein}. From the above conclusion we have
\begin{align*}
\mathcal{S}_1\subseteq \mathcal{S}_2.
\end{align*}
Consequently, our motivation in this study is to find a map from $\mathcal{S}_2$ to $\mathcal{S}_1$. In order to reach the goal. Next, we are going to survey the deep structure of Eq.~\eqref{Stein}. With the notation of the matrix operator $G$, the stand Stein matrix equation~\eqref{Stein} can be written as the follows form
\begin{align}\label{GiX}
\mathcal{X}=G_n(\mathcal{X})+\sum\limits_{i=0}^{n-1} G_{i}(C).
\end{align}

We observe the summation of $G_k(\mathcal{X})$ from $k=0$ to $k=n-1$,
\begin{align}\label{SG}
&\sum\limits_{k=0}^{n-1} G_k(\mathcal{X})=\prod_{j=1}^n f^{(j-1)}(A) \sum\limits_{k=0}^{n-1} G_k(\mathcal{X})\prod_{j=1}^n f^{(n-j)}(B)
+\sum\limits_{i=0}^{n-1}\sum\limits_{k=0}^{n-1} G_{k+i}(C).
\end{align}
Denote the last team of \eqref{SG} by $S_n$ and it summate total $n^2$ teams.
we collect the sets with the same sum of indexs $i$ and $k$,
\begin{align*}
S_n&=\sum\limits_{i=0}^{n-1}\sum\limits_{k=0}^{n-1} G_{k+i}(C)=\left (\sum\limits_{k+i=0}+\cdots+\sum\limits_{k+i=2n-2}\right)G_{k+i}(C)\\
&=\sum\limits_{k=0}^{n-2}(k+1) G_{k}(C)+nG_{n-1}(C)+\sum\limits_{k=n}^{2n-2}(2n-k-1)G_{k}(C)\\
&=\sum\limits_{k=0}^{n-1}(k+1) G_{k}(C)+\sum\limits_{k=0}^{n-2}(n-k-1)G_{k+n}(C).
\end{align*}
Note that $G_{k+n}(C)=\mathcal{A} G_{k}(C)\mathcal{B}$, then,
\begin{align*}
 \sum\limits_{k=0}^{n-1}\left(  G_k(\mathcal{X})+(n-k-1)G_k(C) \right)=\mathcal{A}\sum\limits_{k=0}^{n-1}\left(  G_k(\mathcal{X})+(n-k-1)G_k(C) \right)\mathcal{B}+n\mathcal{C}.
 \end{align*}
If we define the matrix operator $F_{f}:\mathbb{C}^{m\times m}\rightarrow \mathbb{C}^{m\times m}$ associate with the matrix operator $f$ and Eq.~\eqref{eq} by
\begin{subequations}\label{3Ff}
\begin{align}\label{Ff}
F_{f}(\mathcal{X})=\dfrac{1}{n}\left( \sum\limits_{i=0}^{n-1} \left(G_i(\mathcal{X})+(n-i-1)G_i(C)\right) \right),
\end{align}
where $\mathcal{X}\in\mathbb{C}^{m\times m}$. As a consequence, it is apparent that the matrix $F_{f}(\mathcal{X})$ also solve Eq.~\eqref{Stein} if  $\mathcal{X}$ solve Eq.~\eqref{Stein}.
An alternative way to present this formula, the formula of \eqref{Ff} can be written as the following,
\begin{align}\label{Ff1}
F_{f}(\mathcal{X})=\dfrac{1}{n}\left( \sum\limits_{i=0}^{n-1} (G_i(\mathcal{X})+\sum\limits_{j=0}^{i-1} G_j(C)) \right).
\end{align}
\end{subequations}
The matrix operator $F_f$ can also be viewed as a meaningful decomposition,
let the operator $\mathcal{O}$ be defined as the right hand side of the Eq.~\eqref{eq}
\begin{align*}
\mathcal{O}(\mathcal{X})=Af(\mathcal{X})B+C,
\end{align*}
hereafter ${F}_{f}(\mathcal{X})$ can be regard as the algebraic average of the first $n$ elements of the forward orbit of $\mathcal{O}$ start with $\mathcal{X}$
\begin{align*}
\mathcal{X},\mathcal{O}(\mathcal{X}),\cdots,\mathcal{O}^{(n-1)}(\mathcal{X}).
\end{align*}
Now, we firstly give the following important feature for $F_{f}(\mathcal{X})$ with the specific matrix $\mathcal{X}$.
\begin{Proposition}\label{Pro1}
\begin{itemize}
\item[1.]
Suppose that $\mathcal{X}$ is a solution of the Stein matrix equation~\eqref{Stein}, then $F^{(k)}_{f}(\mathcal{X})$ solve Eq.~\eqref{Stein} for any positive integer $k$.
\item[2.]
Suppose that $X$ is a solution of the linear matrix equation~\eqref{eq}, then $F_{f}(X)=X$.
\end{itemize}
\end{Proposition}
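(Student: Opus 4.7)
The plan is to exploit the identification of $F_f$ as the time-average of the forward orbit of $\mathcal{O}$, which was pointed out informally just before the proposition. The central technical ingredient I would prove first is the iterate identity
\[
\mathcal{O}^{(i)}(\mathcal{X}) \;=\; G_i(\mathcal{X}) + \sum_{j=0}^{i-1} G_j(C), \qquad i \geq 0,
\]
by induction on $i$. The base case is $\mathcal{O}^{(0)}(\mathcal{X})=\mathcal{X}=G_0(\mathcal{X})$, and the inductive step uses the recursion $G_{i+1}(Y)=A f(G_i(Y)) B$ together with the linearity of $f$ to write $\mathcal{O}(G_i(\mathcal{X})+\sum_{j=0}^{i-1}G_j(C))=G_{i+1}(\mathcal{X})+\sum_{j=1}^{i}G_j(C)+C$. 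Combined with formula~\eqref{Ff1}, this identity says exactly that $F_f(\mathcal{X})=\tfrac{1}{n}\sum_{i=0}^{n-1}\mathcal{O}^{(i)}(\mathcal{X})$, which turns the informal remark in the text into an equation I can use.

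For Part~1, the base case $k=1$ is essentially already contained in the derivation preceding the proposition: the displayed identity
\[
\sum_{k=0}^{n-1}\!\bigl(G_k(\mathcal{X})+(n-k-1)G_k(C)\bigr) \;=\; \mathcal{A}\!\sum_{k=0}^{n-1}\!\bigl(G_k(\mathcal{X})+(n-k-1)G_k(C)\bigr)\mathcal{B}+n\mathcal{C}
\]
divides by $n$ to yield $F_f(\mathcal{X})=\mathcal{A}F_f(\mathcal{X})\mathcal{B}+\mathcal{C}$. A one-line induction on $k$ then disposes of the general case: if $F_f^{(k)}(\mathcal{X})$ solves Eq.~\eqref{Stein}, apply the base case with $F_f^{(k)}(\mathcal{X})$ in place of $\mathcal{X}$ to conclude the same for $F_f^{(k+1)}(\mathcal{X})$.

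For Part~2, if $X$ solves Eq.~\eqref{eq} then $X$ is a fixed point of $\mathcal{O}$, so the iterate identity immediately forces $\mathcal{O}^{(i)}(X)=X$ for every $i\geq 0$. Averaging over $i=0,1,\ldots,n-1$ collapses to $F_f(X)=\tfrac{1}{n}\sum_{i=0}^{n-1}X=X$.

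The only step that requires genuine care is the inductive verification of the iterate identity, where one has to track how the products $\prod_{j=1}^i f^{(j-1)}(A)$ and $\prod_{j=1}^i f^{(i-j)}(B)$ absorb the new left factor $A$ and right factor $B$ produced by one more application of $\mathcal{O}$; after this bookkeeping is settled, both parts reduce to substitution and a trivial induction, with Part~1 additionally leaning on the computation already performed in the body of the section.
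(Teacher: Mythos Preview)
Your proposal is correct and follows essentially the same route as the paper's proof: Part~1 is the trivial induction built on the computation already displayed before the proposition, and Part~2 invokes the iterate identity $\mathcal{O}^{(i)}(X)=G_i(X)+\sum_{j=0}^{i-1}G_j(C)$ together with formula~\eqref{Ff1}, exactly as the paper does. The only difference is that you make the inductive proof of the iterate identity explicit, whereas the paper simply asserts it.
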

\begin{proof}
For the first part, from the above discussion, $F_{f}(\mathcal{X})$ solve Eq.~\eqref{Stein}. And repeat this conclusion, $F^{(2)}_{f}(\mathcal{X}),\cdots,F^{(k)}_{f}(\mathcal{X})$ both solve Eq.~\eqref{Stein} for each positive integer $k$.

For the second part and arbitrary positive integer $i$, since
\begin{align*}
X=\mathcal{O}^{(i)}(X)=G_i(X)+\sum\limits_{j=0}^{i-1} G_j(C),
\end{align*}
together with \eqref{Ff1}, this completes the proof.
\end{proof}
The first part of Proposition~\ref{Pro1} state that there is a family of solutions $\{ F^{(k)}_{f}(\mathcal{X}) \}$ of Eq.~\eqref{Stein}. The following property tell us that the number of elements of $\{ F^{(k)}_{f}(\mathcal{X}) \}$ is at most 2.
\begin{Proposition}
Suppose that $\mathcal{X}$ is a solution of the linear matrix equation~\eqref{Stein}, then
$F_{f}(\mathcal{X})$ is a fixed-point of $F_{f}$, i.e.,  $F^{(2)}_{f}(\mathcal{X})=F_{f}(\mathcal{X})$.
\end{Proposition}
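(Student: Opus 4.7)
The plan is to reduce the fixed-point identity to Proposition~\ref{Pro1}(2). If I can show that $F_f(\mathcal{X})$ actually lies in $\mathcal{S}_1$, i.e., solves the \emph{original} equation \eqref{eq} rather than merely the Stein equation \eqref{Stein}, then applying Proposition~\ref{Pro1}(2) with the input $F_f(\mathcal{X})$ immediately yields $F_f(F_f(\mathcal{X}))=F_f(\mathcal{X})$. So the whole task collapses to verifying the identity $F_f(\mathcal{X}) = A\,f(F_f(\mathcal{X}))\,B+C$.

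The first ingredient is a shifting lemma: because $f$ is linear and multiplicative and commutes through every $A(\cdot)B$ sandwich inside the product that defines $G_i$, one obtains the basic identity $A\,f(G_i(Y))\,B = G_{i+1}(Y)$ for any matrix $Y$ and any $i\ge 0$. Applying this term-by-term to the expression \eqref{Ff} for $F_f(\mathcal{X})$ gives
\begin{align*}
A\,f(F_f(\mathcal{X}))\,B = \frac{1}{n}\sum_{i=0}^{n-1}\bigl(G_{i+1}(\mathcal{X})+(n-i-1)\,G_{i+1}(C)\bigr)
= \frac{1}{n}\sum_{k=1}^{n}\bigl(G_k(\mathcal{X})+(n-k)\,G_k(C)\bigr),
\end{align*}
after the reindexing $k=i+1$.

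The second ingredient is where the Stein hypothesis on $\mathcal{X}$ is actually used: periodicity gives $G_n(\mathcal{X})=\mathcal{A}\mathcal{X}\mathcal{B}$, so $\mathcal{X}=\mathcal{A}\mathcal{X}\mathcal{B}+\mathcal{C}$ rewrites as $G_n(\mathcal{X})=\mathcal{X}-\mathcal{C}$. Using this to split off the $k=n$ term (noting that the coefficient of $G_n(C)$ is zero), and using $(n-k)=(n-k-1)+1$ to compare the $C$-sum with the one that appears in $F_f(\mathcal{X})$, the right-hand side rearranges into $F_f(\mathcal{X})-C$ plus a $\mathcal{C}$-correction. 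Adding back the final $+C$ then shows $A\,f(F_f(\mathcal{X}))\,B+C=F_f(\mathcal{X})$, i.e.\ $F_f(\mathcal{X})\in\mathcal{S}_1$.

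The only real obstacle is the summation bookkeeping in the last step: the three corrections $\frac{1}{n}\mathcal{C}$, $-\frac{1}{n}\mathcal{X}$, and $-C$ (coming respectively from the shift $(n-k)\!\to\!(n-k-1)$, from $G_n(\mathcal{X})=\mathcal{X}-\mathcal{C}$, and from the missing $k=0$ term of the $C$-sum) must cancel exactly against their counterparts created by restoring the sum to run from $0$ to $n-1$. Once these limits are tracked carefully the cancellation is clean, and then Proposition~\ref{Pro1}(2) finishes the proof with no further computation.
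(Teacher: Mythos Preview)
Your argument is correct. In fact, the intermediate claim you prove---that $F_f(\mathcal{X})$ already solves the original equation~\eqref{eq} whenever $\mathcal{X}$ solves~\eqref{Stein}---is precisely Theorem~\ref{solve eq}, and the paper itself remarks (in the final paragraph of its proof of this proposition) that the fixed-point identity follows immediately from that fact together with Proposition~\ref{Pro1}(2). So your route coincides with the paper's alternative argument; you have simply supplied, via the $G_i$ telescoping, the verification of $F_f(\mathcal{X})\in\mathcal{S}_1$ that the paper defers to Theorem~\ref{solve eq}.

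The paper's \emph{primary} proof takes a different tack: rather than showing $F_f(\mathcal{X})$ solves~\eqref{eq}, it establishes directly the summation identity
\[
\sum_{i=0}^{n-1}G_i\bigl(F_f(\mathcal{X})\bigr)=\sum_{i=0}^{n-1}G_i(\mathcal{X}),
\]
from which $F_f^{(2)}(\mathcal{X})=F_f(\mathcal{X})$ follows by inspecting the defining expression~\eqref{Ff}. This is a self-contained double-sum manipulation that never invokes~\eqref{eq} or Proposition~\ref{Pro1}. Your approach is arguably cleaner in that it isolates the key structural fact ($F_f(\mathcal{X})\in\mathcal{S}_1$) and then lets Proposition~\ref{Pro1}(2) do the work; the paper's primary proof avoids the forward reference at the cost of a somewhat heavier index computation.
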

\begin{proof}
We claim the following identical equation,
\begin{align}\label{fix1}
\sum\limits_{i=0}^{n-1}G_i(F_{f}(\mathcal{X}))=\sum\limits_{i=0}^{n-1} G_i(\mathcal{X}).
\end{align}
Since $\mathcal{X}$ solve the linear matrix equation~\eqref{Stein}, we have
\begin{align*}
G_{i}(\mathcal{X})=G_{i}\left(G_n(\mathcal{X})+\sum\limits_{j=0}^{n-1} G_{j}(C)\right)=G_{i+n}(\mathcal{X})+\sum\limits_{j=0}^{n-1} G_{i+j}(C)
\end{align*}
for $j=0,1,\cdots n-1$. Hence, the left hand side from \eqref{fix1} can be written
\begin{align*}
&\sum\limits_{i=0}^{n-1}G_i(F_{f}(\mathcal{X}))=\dfrac{1}{n}\left(\sum\limits_{i=0}^{n-1}\sum\limits_{j=0}^{n-1} G_{i+j}(\mathcal{X})+(n-j-1)G_{i+j}(C)\right)\\
&=\dfrac{1}{n}\left(\sum\limits_{i=0}^{n-1} (i+1)G_{i}(\mathcal{X})+\sum\limits_{i=0}^{n-1}(n-i-1)\left(G_{i+n}(\mathcal{X})+\sum\limits_{j=0}^{n-1}G_{i+j}(C)\right)\right)\\
&=\sum\limits_{i=0}^{n-1} G_i(\mathcal{X})\\
\end{align*}
is exactly equal to the right hand side from \eqref{fix1}.
Eventually, the result now follows from the expansion of $F_{f}(\mathcal{X})$ and $F^{(2)}_{f}(\mathcal{X})$.

Actually, from the part~1 of Proposition~\ref{Pro1}, and $F_{f}(\mathcal{X})$ is a solution of Eq.~\eqref{eq}. Hence, the equality $F^{(2)}_{f}(\mathcal{X})=F_{f}(\mathcal{X})$ immediately come from the part~2 of Proposition~\ref{Pro1}.
\end{proof}
The most important role of the matrix operator $F_f$  is stated as follows:
\begin{Theorem}\label{solve eq}
Let $\mathcal{X}$ be a solution of the Stein matrix equation~\eqref{Stein}, then $F_{f}(\mathcal{X})$ solve the original linear matrix equation~\eqref{eq} for $1\leq k\leq n-1$.
\end{Theorem}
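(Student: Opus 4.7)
The plan is to exploit the observation, stated in the paper just before the theorem, that $F_{f}(\mathcal{X})$ is exactly the Ces\`aro average of the first $n$ points on the forward $\mathcal{O}$-orbit of $\mathcal{X}$, namely $F_{f}(\mathcal{X}) = \tfrac{1}{n}\sum_{i=0}^{n-1} \mathcal{O}^{(i)}(\mathcal{X})$. Under the Stein hypothesis $\mathcal{X} = \mathcal{A}\mathcal{X}\mathcal{B} + \mathcal{C}$, the orbit should close up at step $n$, so this would be the average over a closed orbit of the affine map $\mathcal{O}$; averaging over a closed orbit of an affine map always produces a fixed point of that map, and being a fixed point of $\mathcal{O}$ is precisely the statement of \eqref{eq}.

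First I would make the ``closed orbit'' claim precise. By induction on $i$, using $G_1(X)=Af(X)B$ and the composition rule $G_i\circ G_j = G_{i+j}$ recorded at the start of Section~2 (and already used in Proposition~\ref{Pro1}), one gets $\mathcal{O}^{(i)}(X) = G_i(X) + \sum_{j=0}^{i-1} G_j(C)$, which matches \eqref{Ff1} and confirms the Ces\`aro-average rewriting of $F_{f}(\mathcal{X})$. Taking $i=n$ and invoking \eqref{AABBCC} together with $f^{(n)}=I$ gives $\mathcal{O}^{(n)}(\mathcal{X}) = \mathcal{A}\mathcal{X}\mathcal{B} + \mathcal{C}$, which equals $\mathcal{X}$ precisely because $\mathcal{X}$ solves \eqref{Stein}.

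Second, I would verify that $\mathcal{O}$ commutes with the Ces\`aro average. Because $f$ is linear, $\mathcal{O}$ is affine: for any matrices $Y_0,\ldots,Y_{n-1}$,
$$\mathcal{O}\!\left(\frac{1}{n}\sum_{i=0}^{n-1} Y_i\right) = \frac{1}{n}\sum_{i=0}^{n-1} A f(Y_i) B + C = \frac{1}{n}\sum_{i=0}^{n-1} \mathcal{O}(Y_i),$$
the last equality using that the $n$ constants $C/n$ obtained by redistributing the trailing $+C$ into the sum add back up to $C$. Specializing to $Y_i = \mathcal{O}^{(i)}(\mathcal{X})$ gives $\mathcal{O}(F_{f}(\mathcal{X})) = \tfrac{1}{n}\sum_{i=1}^{n}\mathcal{O}^{(i)}(\mathcal{X})$, so subtracting $F_{f}(\mathcal{X})$ telescopes to $\mathcal{O}(F_{f}(\mathcal{X})) - F_{f}(\mathcal{X}) = \tfrac{1}{n}\bigl(\mathcal{O}^{(n)}(\mathcal{X}) - \mathcal{X}\bigr) = 0$ by step one. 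Hence $F_{f}(\mathcal{X})$ is a fixed point of $\mathcal{O}$, as claimed.

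I do not expect a genuine obstacle here; this is the standard Ces\`aro-averaging trick for turning a periodic point of an affine map into a fixed point. The only step that requires a little care is the affine-commutation identity, which leans essentially on the linearity of $f$; in the anti-linear case one would have to reorder the factors appearing inside $G_i$, but the paper's ``without loss of generality linear'' convention sidesteps that bookkeeping.
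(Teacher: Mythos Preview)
Your proposal is correct and is essentially the paper's own proof: the paper also writes $F_f(\mathcal{X})=\tfrac{1}{n}\sum_{i=0}^{n-1}\mathcal{O}^{(i)}(\mathcal{X})$, uses $Af(\mathcal{O}^{(i)}(\mathcal{X}))B=\mathcal{O}^{(i+1)}(\mathcal{X})-C$, and telescopes to $\tfrac{1}{n}(\mathcal{X}-\mathcal{O}^{(n)}(\mathcal{X})+nC)=C$ via the Stein hypothesis. Your ``affine Ces\`aro average over a closed orbit'' framing is just a repackaging of that same computation.
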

\begin{proof}
By directly calculate, since $\mathcal{O}^{(i+1)}(\mathcal{X})=Af(\mathcal{O}^{(i)}(\mathcal{X}))B+C$ for arbitrary nonnegative integer $i$. We have
\begin{align*}
&F_{f}(\mathcal{X})-Af(F_{f}(\mathcal{X}))B =
\dfrac{1}{n}\sum\limits_{i=0}^{n-1}\mathcal{O}^{(i)}(\mathcal{X})
-\dfrac{1}{n} \sum\limits_{i=1}^{n}\left(\mathcal{O}^{(i)}(\mathcal{X})-C\right)\\
&=\dfrac{1}{n}\left(\mathcal{X}-\mathcal{O}^{(n)}(\mathcal{X})+nC\right)=C,
\end{align*}
which completes the proof.
\end{proof}
Now we have enough experience to analyze the structure of the set of $\mathcal{S}_1$, the set of $\mathcal{S}_2$ can be decomposed into two sets
\begin{align*}
\mathcal{S}_2=\mathcal{S}_1 \cup (\mathcal{S}_2 \setminus \mathcal{S}_1).
\end{align*}
As mentioned above, $X\in\mathcal{S}_1$ implies that $F_{f}(X)=X\in\mathcal{S}_1$ and $\mathcal{X}\in\mathcal{S}_2 \setminus \mathcal{S}_1$ implies that $F_{f}(\mathcal{X})\in\mathcal{S}_1$. The first conclusion is $\mathcal{S}_1=F_{f}(\mathcal{S}_2)$, namely, the operator $F_{f}:\mathcal{S}_2\rightarrow\mathcal{S}_1$ is a surjective function. And, the general solution of Eq.~\eqref{eq} can be found from the general solution of Eq.~\eqref{Stein}.
Second, if we restrict the domain $\mathcal{S}_2$ to $\mathcal{S}_1$, then $F_{f}:\mathcal{S}_1\rightarrow\mathcal{S}_1$ is an identity map. We figure out the operator $F_{f}:\mathcal{S}_2\rightarrow\mathcal{S}_1$ is an injective function if and only if $F_{f}(\mathcal{X})=\mathcal{X}$ for all $\mathcal{X}\in\mathcal{S}_2$ and is equivalent to $\mathcal{S}_1=\mathcal{S}_2$. We answer the question as mentioned before in the beginning of this section.
\section{Solvability conditions of Eq.~\eqref{eq}}
In this section, the necessary and sufficient conditions for the existence of the solution of Eq.~\eqref{eq} are shown. We also give a necessary  conditions for the uniquely solvable of the solution of Eq.~\eqref{eq}, and provide the close-form of solution of Eq.~\eqref{eq} whenever Eq.~\eqref{eq} is uniquely solvable. Some iterative methods for finding the unique solution of Eq.~\eqref{eq} are suggested.

First of all, we summarize the above result in Section~2 and an immediate consequence of Theorem~\ref{solve eq} is following.
\begin{Theorem}\label{main}
The linear matrix equation~\eqref{eq} is solvable if and only if the Stein matrix equation~\eqref{Stein} is solvable. Furthermore, Eq.~\eqref{eq} has a unique solution $X$ if Eq.~\eqref{Stein} has a unique solution $\mathcal{X}$.
\end{Theorem}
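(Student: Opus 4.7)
The statement is essentially a corollary of the machinery already built in Section~2, so my plan is to read off the two directions from the facts $\mathcal{S}_1 \subseteq \mathcal{S}_2$ and $F_f(\mathcal{S}_2) \subseteq \mathcal{S}_1$, and then handle uniqueness by set inclusion.

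For the equivalence of solvability, I would argue both inclusions on the solution sets. The forward direction ($\mathcal{S}_1 \neq \emptyset \Rightarrow \mathcal{S}_2 \neq \emptyset$) is immediate from the observation already recorded in Section~2 that any solution $X$ of Eq.~\eqref{eq} also satisfies Eq.~\eqref{Stein}; this is just the iteration $X = \mathcal{O}^{(n)}(X) = G_n(X) + \sum_{i=0}^{n-1} G_i(C) = \mathcal{A} X \mathcal{B} + \mathcal{C}$ using the period-$n$ property of $f$. For the converse direction, suppose $\mathcal{X} \in \mathcal{S}_2$. Then Theorem~\ref{solve eq} produces the matrix $F_f(\mathcal{X}) \in \mathbb{C}^{m\times m}$ and shows directly that it satisfies $F_f(\mathcal{X}) = A f(F_f(\mathcal{X})) B + C$, so $F_f(\mathcal{X}) \in \mathcal{S}_1$ and hence $\mathcal{S}_1 \neq \emptyset$.

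For the uniqueness statement, assume Eq.~\eqref{Stein} has a unique solution $\mathcal{X}$, so $\mathcal{S}_2 = \{\mathcal{X}\}$. Combining $\mathcal{S}_1 \subseteq \mathcal{S}_2$ with the fact just proved that $\mathcal{S}_1$ is non-empty (via $F_f(\mathcal{X}) \in \mathcal{S}_1$) forces $\mathcal{S}_1 = \{\mathcal{X}\}$, i.e.\ Eq.~\eqref{eq} has the unique solution $X = \mathcal{X} = F_f(\mathcal{X})$ (where the last equality follows from part~2 of Proposition~\ref{Pro1} applied to $X \in \mathcal{S}_1$).

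I do not anticipate any serious obstacle: every ingredient (the containment $\mathcal{S}_1 \subseteq \mathcal{S}_2$, the surjection $F_f \colon \mathcal{S}_2 \to \mathcal{S}_1$, and the identity $F_f|_{\mathcal{S}_1} = \mathrm{id}$) has already been established. The only thing to be mildly careful about is stating uniqueness as a one-way implication rather than a biconditional, because in general $\mathcal{S}_1 \subsetneq \mathcal{S}_2$ is possible (the toy example $X = -X + C$ in Section~2 illustrates that Eq.~\eqref{eq} may be uniquely solvable while Eq.~\eqref{Stein} is not), so the converse direction of uniqueness is false and should not be claimed.
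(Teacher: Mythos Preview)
Your proposal is correct and follows exactly the route the paper intends: the paper states Theorem~\ref{main} without a formal proof, describing it merely as a summary of the results in Section~2 and ``an immediate consequence of Theorem~\ref{solve eq},'' which is precisely the combination $\mathcal{S}_1 \subseteq \mathcal{S}_2$ together with $F_f(\mathcal{S}_2)\subseteq\mathcal{S}_1$ that you spell out. Your added remark that uniqueness is only a one-way implication, illustrated by the scalar example from Section~2, is also consistent with the paper's presentation.
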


Before demonstrating the unique solvability conditions of Eq.~\eqref{eq},
 we need to define that two sets $S_1$ and $S_2$ of complex numbers are said to be \emph{reciprocal free} if and only if $1/\lambda\not\in S_1$ whenever $\lambda\in S_2$. This definition also regards $0$ and $\infty$ as reciprocals of each other. We also notice that a single set $S$ is reciprocal free if and only if $1/\lambda\not\in S$ whenever $\lambda\in S$. By making use of Theorem~\ref{solve eq} and apply Kronecker product to Eq.~\eqref{Stein}, we can reveal the following outcome.
\begin{Theorem}\label{Coro}
If matrices $\mathcal{A}$ and $\mathcal{B}$ in Eq.~\eqref{Stein} such that the set of $\sigma(\mathcal{A})$ and the set of $\sigma(\mathcal{B})$ are reciprocal free, then Eq.~\eqref{eq} is uniquely solvable. Moreover, the unique solution of Eq.~\eqref{eq} is
\begin{align*}
X=\dfrac{1}{n}\left( \sum\limits_{i=0}^{n-1}\prod_{j=1}^i f^{(j-1)}(A) \left[f^{(i)}\left(\mathcal{X}\right)+(n-i-1)f^{(i)}(C) \right]\prod_{j=1}^i f^{(i-j)}(B) \right),
\end{align*}
where $\mathcal{X}={\rm{vec}}^{-1}((I_{n^2}-\mathcal{B}^\top\otimes\mathcal{A})^{-1}{\rm{vec}}(\mathcal{C}))$ is the unique solution of Eq.~\eqref{Stein}.
\end{Theorem}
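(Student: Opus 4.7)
The plan is to assemble three ingredients already in place: the standard Kronecker-product criterion for unique solvability of a Stein equation, Theorem~\ref{main}, and Theorem~\ref{solve eq}.

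First, I would vectorize Eq.~\eqref{Stein} into the linear system $(I - \mathcal{B}^\top \otimes \mathcal{A})\,\mathrm{vec}(\mathcal{X}) = \mathrm{vec}(\mathcal{C})$ via the identity $\mathrm{vec}(\mathcal{A}\mathcal{X}\mathcal{B}) = (\mathcal{B}^\top\otimes\mathcal{A})\mathrm{vec}(\mathcal{X})$. Since the eigenvalues of $\mathcal{B}^\top \otimes \mathcal{A}$ are the products $\mu\lambda$ with $\mu \in \sigma(\mathcal{B}^\top) = \sigma(\mathcal{B})$ and $\lambda \in \sigma(\mathcal{A})$, the coefficient matrix is nonsingular precisely when $\lambda\mu \neq 1$ for every such pair, which is exactly the hypothesis that $\sigma(\mathcal{A})$ and $\sigma(\mathcal{B})$ are reciprocal free (the convention about $0$ and $\infty$ being reciprocals imposes no extra constraint here, since any eigenvalue product involving $0$ is automatically not $1$). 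Under this hypothesis the Stein equation has the unique solution $\mathcal{X} = \mathrm{vec}^{-1}\bigl((I - \mathcal{B}^\top\otimes\mathcal{A})^{-1}\mathrm{vec}(\mathcal{C})\bigr)$ displayed in the statement.

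Next, Theorem~\ref{main} transfers this unique solvability to Eq.~\eqref{eq}; write $X$ for the unique solution. Since Theorem~\ref{solve eq} produces the solution $F_{f}(\mathcal{X})$ of Eq.~\eqref{eq}, uniqueness forces $X = F_{f}(\mathcal{X})$. It then only remains to unfold the definition \eqref{Ff}, which reads $F_{f}(\mathcal{X}) = \tfrac{1}{n}\sum_{i=0}^{n-1}\bigl(G_i(\mathcal{X}) + (n-i-1) G_i(C)\bigr)$. Using linearity of $f$ (and hence of $f^{(i)}$), the two terms $f^{(i)}(\mathcal{X})$ and $(n-i-1)f^{(i)}(C)$ appearing through $G_i$ can be combined into the single bracket $f^{(i)}(\mathcal{X}) + (n-i-1)f^{(i)}(C)$, and substituting the explicit product form $G_i(\cdot) = \prod_{j=1}^i f^{(j-1)}(A)\,[\,\cdot\,]\,\prod_{j=1}^i f^{(i-j)}(B)$ then yields the displayed closed-form formula.

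The main obstacle is essentially bookkeeping rather than a genuinely new idea: in the Kronecker step one must correctly translate the reciprocal-free hypothesis into nonsingularity of $I - \mathcal{B}^\top\otimes\mathcal{A}$, and in the final step one must take care that the transpose in $\mathcal{B}^\top$ and the indexing of the two products defining $G_i$ line up without introducing a sign, index, or ordering error. Beyond these routine checks, everything follows directly from the machinery already developed in Section~2.
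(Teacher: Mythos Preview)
Your proposal is correct and follows exactly the approach the paper itself indicates: the paper presents this theorem with the one-line justification ``By making use of Theorem~\ref{solve eq} and apply Kronecker product to Eq.~\eqref{Stein}'', and you have simply written out those two ingredients (together with the transfer step from Theorem~\ref{main}) in full detail.
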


Now we should pay attention to finding the close-form of the solution $X$ of Eq.~\eqref{eq}. For arbitrary solution $\mathcal{X}$ of Eq.~\eqref{Stein}, substituting Eq.~\eqref{GiX} into the representation of $F_{f}$ \eqref{Ff}
\begin{align*}
F_{f}(\mathcal{X})-\sum\limits_{i=0}^{n-1} G_i(C)=G_n(F_{f}(\mathcal{X})).
\end{align*}
Performing the same procedure, we have
\begin{align*}
F_{f}(\mathcal{X})-\sum\limits_{j=0}^{k-1}\sum\limits_{i=0}^{n-1} G_{i+jn}(C)=G_{kn}(F_{f}(\mathcal{X})).
\end{align*}
 Under some reasonable requirements of matrices $\mathcal{A}$ and $\mathcal{B}$, such as $\rho(\mathcal{A})\rho(\mathcal{B})<1$. Then $G_{kn}(F_{f}(\mathcal{X}))=\mathcal{A}^k F_{f}(\mathcal{X}) \mathcal{B}^k$ tends to zero as $k$ approach to infinity.  We can get a close-form of $F_{f}(\mathcal{X})$ when $\mathcal{X}$ is a solution of Eq.~\eqref{Stein}. More precisely, it can be obtained a numerical method to compute $F_{f}(\mathcal{X})$.
\begin{Theorem}\label{close}
Under the mile condition
\begin{align}\label{cond}
\rho\equiv\rho(\mathcal{A})\rho(\mathcal{B})<1.
\end{align}
Then Eq.~\eqref{Stein} is uniquely solvable and therefore Eq.~\eqref{eq} is also uniquely solvable. Furthermore,
the unique solution $X$ of Eq.~\eqref{eq} satisfies
\begin{align*}
\limsup\limits_{k\rightarrow\infty}\sqrt[k]{\| X-\sum\limits_{j=0}^{k-1}\mathcal{A}^j \left[\sum\limits_{i=0}^{n-1} G_{i}(C)\right] \mathcal{B}^j\|}\leq \rho.
\end{align*}
\end{Theorem}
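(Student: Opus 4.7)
The statement packages two claims: unique solvability under the spectral condition $\rho(\mathcal{A})\rho(\mathcal{B})<1$, and a geometric convergence rate for the partial sums $S_k:=\sum_{j=0}^{k-1}\mathcal{A}^j\bigl[\sum_{i=0}^{n-1}G_i(C)\bigr]\mathcal{B}^j$ approaching $X$. For the first claim, I would vectorise \eqref{Stein} to $(I-\mathcal{B}^\top\otimes\mathcal{A})\mbox{vec}(\mathcal{X})=\mbox{vec}(\mathcal{C})$, note that $\sigma(\mathcal{B}^\top\otimes\mathcal{A})=\{\lambda\mu:\lambda\in\sigma(\mathcal{A}),\,\mu\in\sigma(\mathcal{B})\}$, and observe that $|\lambda\mu|\le\rho(\mathcal{A})\rho(\mathcal{B})<1$ rules out $1$ as an eigenvalue; hence $I-\mathcal{B}^\top\otimes\mathcal{A}$ is invertible and \eqref{Stein} has a unique $\mathcal{X}$. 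Theorem~\ref{main} then lifts uniqueness to \eqref{eq}.

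For the rate estimate, the discussion immediately preceding the theorem already supplies the identity
\begin{align*}
F_f(\mathcal{X})-\sum_{j=0}^{k-1}\sum_{i=0}^{n-1}G_{i+jn}(C)=G_{kn}(F_f(\mathcal{X})).
\end{align*}
The next step is to rewrite both sides using only powers of $\mathcal{A}$ and $\mathcal{B}$. Starting from the relation $G_{k+n}(C)=\mathcal{A}G_k(C)\mathcal{B}$ recorded in Section~2, I would upgrade it to $G_n(Y)=\mathcal{A}Y\mathcal{B}$ for arbitrary $Y$ (the argument plays no role; only the outer periodic products matter) and iterate to obtain $G_{jn}(Y)=\mathcal{A}^j Y\mathcal{B}^j$. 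Combined with the semigroup law $G_{a}\circ G_{b}=G_{a+b}$, this yields $G_{i+jn}(C)=\mathcal{A}^j G_i(C)\mathcal{B}^j$. Substituting $X=F_f(\mathcal{X})$ from Theorem~\ref{solve eq} and regrouping the double sum collapses the identity to
\begin{align*}
X-S_k=\mathcal{A}^k X\mathcal{B}^k.
\end{align*}

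The conclusion is then one line of normed analysis: for any submultiplicative matrix norm, $\|X-S_k\|\le\|\mathcal{A}^k\|\,\|X\|\,\|\mathcal{B}^k\|$. Taking $k$-th roots, invoking Gelfand's formula $\lim_{k\to\infty}\|\mathcal{A}^k\|^{1/k}=\rho(\mathcal{A})$ together with its $\mathcal{B}$-analogue, and noting $\|X\|^{1/k}\to 1$, the claimed bound $\limsup_k\sqrt[k]{\|X-S_k\|}\le\rho(\mathcal{A})\rho(\mathcal{B})=\rho$ drops out. The only real obstacle I foresee is the algebraic bookkeeping that converts the double sum indexed by $(i,j)$ with $0\le i\le n-1$, $0\le j\le k-1$ into the matrix-power form $S_k$; this is a mechanical consequence of the $G_{jn}(Y)=\mathcal{A}^j Y\mathcal{B}^j$ lemma, but it must be tracked carefully against the definitions of $\mathcal{A}$, $\mathcal{B}$, $G_i$ in Section~2. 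Everything analytic thereafter is a one-line Gelfand argument.
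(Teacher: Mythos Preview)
Your proposal is correct and follows essentially the same route as the paper: the argument immediately preceding the theorem statement is the paper's proof, establishing the identity $F_f(\mathcal{X})-\sum_{j=0}^{k-1}\sum_{i=0}^{n-1}G_{i+jn}(C)=G_{kn}(F_f(\mathcal{X}))=\mathcal{A}^k F_f(\mathcal{X})\mathcal{B}^k$ and noting that the right-hand side vanishes under~\eqref{cond}. Your write-up simply makes the implicit steps explicit---the Kronecker/vectorisation argument for unique solvability (which the paper packages via Theorem~\ref{Coro}), the rewriting $G_{i+jn}(C)=\mathcal{A}^j G_i(C)\mathcal{B}^j$, and the Gelfand-formula justification of the $\limsup$ bound---so there is no substantive divergence.
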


\begin{Remark}
The close-form of $X$ in Theorem~\ref{close} is called the Smith iteration. As a matter of a fact, based on Eq.~\eqref{GiX}, if the assumption \eqref{cond} holds, Zhou et al. have proposed the following Smith-type iterations \cite{Zhou2009}.
\begin{itemize}
\item[1.]
Since $X-\sum\limits_{j=0}^{k-1}\sum\limits_{i=0}^{n-1} G_{i+jn}(C)=G_{kn}(X)$, we define $X_k=\sum\limits_{j=0}^{k-1} \mathcal{A}^j\sum\limits_{i=0}^{n-1} G_{i}(C) \mathcal{B}^j$ and $X_k$ can be designed as the following Smith iteration,
\begin{align*}
X_0 &= 0,\\
X_{k+1} &= \mathcal{A} X_{k} \mathcal{B}+\mathcal{C}.
\end{align*}
The convergence rate can be shown
\begin{align*}
\limsup\limits_{k\rightarrow\infty}\sqrt[k]{\| X-X_k|}\leq \rho.
\end{align*}
\item[2.]
Given a prescribed positive integer $\ell$, we apply Smith iteration to the Stein matrix equation, which is equivalent to Eq.~\eqref{eq},
\begin{align*}
X= \mathcal{A}_{\ell n}X\mathcal{B}_{\ell n}+\mathcal{C}_{\ell n}.
\end{align*}
We define $X_k=\sum\limits_{j=0}^{k-1}\mathcal{A}^{\ell j} \left[\sum\limits_{i=0}^{\ell n-1} G_{i}(C)\right] \mathcal{B}^{\ell j}$ and $X_k$ can be designed as the following Smith($\ell$) iteration.
\begin{align*}
X_0 &= 0,\\
X_{k+1} &= \mathcal{A}^\ell X_{k} \mathcal{B}^\ell+\sum\limits_{i=0}^{\ell-1}\mathcal{A}^i \mathcal{C}\mathcal{B}^i.
\end{align*}
The convergence rate can be shown
\begin{align*}
\limsup\limits_{k\rightarrow\infty}\sqrt[k]{\| X-X_k\|}\leq \rho^\ell.
\end{align*}
\item[3.]
Given a prescribed positive integer $r$, we observe
\begin{align*}
X&=\mathcal{A}^r X \mathcal{B}^r+\sum\limits_{i_1=0}^{r-1}\mathcal{A}^{i_1}\mathcal{C}\mathcal{B}^{i_1}  =(\mathcal{A}^{r})^r X (\mathcal{B}^{r})^r+\sum\limits_{i_2=0}^{r-1}\mathcal{A}^{ri_2}\left[\sum\limits_{i_1=0}^{r-1} \mathcal{A}^{i_2}\mathcal{C}\mathcal{B}^{i_2}\right]\mathcal{B}^{ri_2}\\
&=\mathcal{A}^{r^2} X \mathcal{B}^{r^2}+\sum\limits_{i_1,i_2=0}^{r-1} \mathcal{A}^{i_1+ri_2}\mathcal{C}\mathcal{B}^{i_1+ri_2}=\cdots\\
&=\mathcal{A}^{r^k} X \mathcal{B}^{r^k}+\sum\limits_{i_1,i_2,\cdots,i_k=0}^{r-1} \mathcal{A}^{i_1+ri_2+\cdots+r^{k-1}r_k}\mathcal{C}\mathcal{B}^{i_1+ri_2+\cdots+r^{k-1}r_k}.
\end{align*}
We define $X_k=\sum\limits_{i_1,i_2,\cdots,i_k=0}^{r-1} \mathcal{A}^{i_1+ri_2+\cdots+r^{k-1}r_k}\mathcal{C}\mathcal{B}^{i_1+ri_2+\cdots+r^{k-1}r_k}$ and $X_k$ can be designed as the following $r$-Smith iteration,
\begin{align*}
A_0&=\mathcal{A},\,B_0=\mathcal{B},\,X_0 =\mathcal{C},\\
A_{k+1}&=A_k^r,\,B_{k+1}=B_k^r,\,  X_{k+1} = \sum\limits_{i=0}^{r-1}\mathcal{A}_k^i \mathcal{C}\mathcal{B}_k^i.
\end{align*}
The convergence rate can be shown
\begin{align*}
\limsup\limits_{k\rightarrow\infty}\sqrt[r^k]{\| X-X_k\|}\leq \rho.
\end{align*}
This is so-call $r$-Smith iteration.
\end{itemize}
Smith-type iterations have attracted much interests in many papers, one can see \cite{Zhou2011,Zhou2009}  and the references therein.
\end{Remark}
\begin{Remark}
If $f$ is anti-linear, we only need replace the coefficients of Eq.~\eqref{Stein} $\mathcal{A}$, $\mathcal{B}$ and $\mathcal{C}$ in \eqref{AABBCC} and the formulation \eqref{3Ff} with the correspondingly following.
\begin{itemize}
\item[a.]
If $n$ is even,
\begin{align*}
\mathcal{A}&=\prod_{i=1}^{\frac{n}{2}} f^{(2i-2)}(A)f^{(2i-1)}(B),\quad\mathcal{B}=\prod_{i=1}^{\frac{n}{2}} f^{(n-2i+1)}(A)f^{(n-2i)}(B),\\
\mathcal{C}&=\sum\limits_{i=1}^{\frac{n}{2}} \left(\prod_{j=1}^{i-1} f^{(2j-2)}(A)f^{(2j-1)}(B)\right) \left[ f^{(2i-2)}(C)+f^{(2i-2)}(A) f^{(2i-1)}(C) f^{(2i-2)}(B) \right] \\
 &\left( \prod_{j=1}^{i-1} f^{(2j-1)}(A)f^{(2j-2)}(B)\right),
\end{align*}
and
\begin{align*}
&F_{f}(\mathcal{X})=\dfrac{1}{n}\left( \sum\limits_{i=0}^{\frac{n-2}{2}}\prod_{j=1}^i f^{(2j-2)}(A)f^{(2j-1)}(B) \left[f^{(2i)}(\mathcal{X})+(n-2i-1)f^{(2i)}(C)\right.  \right. \\
&\left.+f^{(2i)}(A) [f^{(2i+1)}(\mathcal{X})+(n-2i-2)f^{(2i+1)}(C)] f^{(2i)}(B)\right]\left.\prod_{j=1}^i f^{(2j-1)}(A)f^{(2j-2)}(B) \right).
\end{align*}
\item[b.]If $n$ is odd,
\begin{align*}
\mathcal{A}&=\left(\prod_{i=1}^{\frac{n-1}{2}} f^{(2i-2)}(A)f^{(2i-1)}(B)\right)f^{(n-1)}(A),\quad\mathcal{B}=f^{(n-1)}(B)\left(\prod_{i=1}^{\frac{n-1}{2}} f^{(n-2i+1)}(A)f^{(n-2i)}(B)\right),\\
\mathcal{C}&=\left(\prod_{j=1}^{\frac{n-1}{2}} f^{(2j-2)}(A)f^{(2j-1)}(B)\right)f^{(n-1)}(C)\left(\prod_{j=1}^{\frac{n-1}{2}} f^{(2j-1)}(A)f^{(2j-2)}(B)\right)+ \\
 &\sum\limits_{i=1}^{\frac{n-1}{2}}\left(\prod_{j=1}^{i-1} f^{(2j-2)}(A)f^{(2j-1)}(B)\right)\left[ f^{(2i-2)}(C)+f^{(2i-2)}(A) f^{(2i-1)}(C) f^{(2i-2)}(B) \right]\\
 &\left( \prod_{j=1}^{i-1} f^{(2j-1)}(A)f^{(2j-2)}(B)\right),
\end{align*}
and
\begin{align*}
&F_{f}(\mathcal{X})=\dfrac{1}{n}\left( \left(\prod_{j=1}^{\frac{n-1}{2}} f^{(2j-2)}(A)f^{(2j-1)}(B)\right) f^{(n-1)}(\mathcal{X})\left( \prod_{j=1}^{\frac{n-1}{2}} f^{(2j-1)}(A)f^{(2j-2)}(B)\right)\right.\\
&+\sum\limits_{i=0}^{\frac{n-3}{2}}\prod_{j=1}^i f^{(2j-2)}(A)f^{(2j-1)}(B) \left[f^{(2i)}(\mathcal{X})+(n-2i-1)f^{(2i)}(C)\right.\left.\right. \\
&\left.+f^{(2i)}(A) [f^{(2i+1)}(\mathcal{X})+(n-2i-2)f^{(2i+1)}(C)] f^{(2i)}(B)\right]\left.\prod_{j=1}^i f^{(2j-1)}(A)f^{(2j-2)}(B) \right).
\end{align*}
\end{itemize}
\end{Remark}
\section{Some representations of $F_{f}$}
In this section, we present two equivalent forms of the matrix operator $F_f$ as defined in \eqref{3Ff} under some reasonable speculations. First, since period-$n$ implies period-$kn$ for any positive integer $k$. After a composition of $kn$ copies of $\mathcal{O}$ of the right hand side of Eq.~\eqref{eq}, we still have a Stein matrix equation
\begin{align}\label{knStein}
Y= \mathcal{A}_{kn}Y\mathcal{B}_{kn}+\mathcal{C}_{kn}.
\end{align}
Note that if $\mathcal{X}$ solve the Stein matrix equation~\eqref{Stein}, then $\mathcal{X}$ also satisfies the new Stein matrix equation~\eqref{knStein}.
The matrix $Y$ solve \eqref{knStein} and the correspondingly $F_{f}(Y)$ is
\begin{align}\label{prove1}
F_{f}(Y)&=\dfrac{1}{kn}\left(\sum\limits_{i=0}^{kn-1} G_i(Y)+(kn-i-1)G_i(C) \right).
\end{align}
It is nature to ask whether the new matrix operator $F_{f}(\mathcal{X})$ associate with Eq.~\eqref{knStein} in \eqref{prove1} is equal to the original matrix operator $F_{f}(\mathcal{X})$ associate with Eq.~\eqref{Stein} in \eqref{3Ff} whenever $\mathcal{X}$ solve Eq.~\eqref{Stein}? The answer is true, we have the following theorem.
\begin{Theorem}
Let the matrix operator $F_f$ be defined as in \eqref{prove1}, then the value of
$F_{f}(\mathcal{X})$  associate with Eq.~\eqref{knStein} is independent of $k$.
\end{Theorem}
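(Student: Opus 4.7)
The plan is to exploit the alternative representation \eqref{Ff1} of $F_f$, which rewrites $F_f(\mathcal{X})$ as the orbit average $\frac{1}{n}\sum_{i=0}^{n-1}\mathcal{O}^{(i)}(\mathcal{X})$ of the operator $\mathcal{O}(\mathcal{X})=Af(\mathcal{X})B+C$. The same swap-of-summation trick that converted \eqref{Ff} into \eqref{Ff1} converts \eqref{prove1} into $F_f(\mathcal{X}) = \frac{1}{kn}\sum_{i=0}^{kn-1}\mathcal{O}^{(i)}(\mathcal{X})$. The theorem therefore reduces to showing that this longer average agrees with the short one whenever $\mathcal{X}$ solves \eqref{Stein}.

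The pivotal observation is that any such $\mathcal{X}$ is an $n$-periodic point of $\mathcal{O}$. Starting from the inductive identity $\mathcal{O}^{(i)}(\mathcal{X}) = G_i(\mathcal{X})+\sum_{s=0}^{i-1}G_s(C)$, which follows directly from the definitions of $\mathcal{O}$ and $G_i$ and is already invoked in the proof of Proposition~\ref{Pro1}, specializing to $i=n$ gives $\mathcal{O}^{(n)}(\mathcal{X}) = G_n(\mathcal{X}) + \sum_{s=0}^{n-1}G_s(C) = \mathcal{A}\mathcal{X}\mathcal{B}+\mathcal{C}$, which equals $\mathcal{X}$ precisely because $\mathcal{X}$ satisfies \eqref{Stein}. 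Iterating, $\mathcal{O}^{(jn+r)}(\mathcal{X}) = \mathcal{O}^{(r)}(\mathcal{X})$ for every $0\le r\le n-1$ and every $j\ge 0$.

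Once this periodicity is established, I would partition $\{0,1,\ldots,kn-1\}$ into $k$ consecutive blocks of length $n$ and collapse $\sum_{i=0}^{kn-1}\mathcal{O}^{(i)}(\mathcal{X}) = \sum_{j=0}^{k-1}\sum_{r=0}^{n-1}\mathcal{O}^{(r)}(\mathcal{X}) = k\sum_{r=0}^{n-1}\mathcal{O}^{(r)}(\mathcal{X})$. Dividing by $kn$ absorbs the factor $k$ and delivers $\frac{1}{n}\sum_{r=0}^{n-1}\mathcal{O}^{(r)}(\mathcal{X})$, which is exactly the original $F_f(\mathcal{X})$ from \eqref{3Ff} and is manifestly independent of $k$.

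The only real obstacle I anticipate is the bookkeeping that reconciles the weighted form $(kn-i-1)G_i(C)$ in \eqref{prove1} with the orbit-average form; verifying this identity is a direct reindexing, after which the theorem is a one-line consequence of the fact that \eqref{Stein} closes the forward orbit of $\mathcal{O}$ through $\mathcal{X}$ after exactly $n$ steps.
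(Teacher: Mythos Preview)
Your argument is correct and follows a genuinely different route from the paper. The paper argues by induction on $k$: writing $F_{f,k}$ for the operator in \eqref{prove1}, it splits $\sum_{i=0}^{k_0 n-1}$ at $i=n$, uses $G_{i+n}(\cdot)=\mathcal{A}\,G_i(\cdot)\,\mathcal{B}$ to recognize the tail as $(k_0-1)n\,\mathcal{A}\,F_{f,k_0-1}(\mathcal{X})\,\mathcal{B}$, applies the inductive hypothesis $F_{f,k_0-1}(\mathcal{X})=F_f(\mathcal{X})$, and then invokes Theorem~\ref{solve eq} (so that $F_f(\mathcal{X})$ itself satisfies \eqref{Stein}) to collapse $\mathcal{A}F_f(\mathcal{X})\mathcal{B}+\mathcal{C}$ back to $F_f(\mathcal{X})$. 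You instead pass once to the orbit-average form and observe that \eqref{Stein} is literally the statement $\mathcal{O}^{(n)}(\mathcal{X})=\mathcal{X}$; the $kn$-average then reduces to the $n$-average by pure periodicity of the orbit. Your path is shorter and more conceptual, and it avoids the forward appeal to Theorem~\ref{solve eq}; the paper's path stays entirely in the $G_i$-calculus and never has to justify the reindexing from the weighted form $(kn-i-1)G_i(C)$ in \eqref{prove1} to the orbit form, which is the one piece of bookkeeping your approach must still discharge.
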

\begin{proof}
It is clear that
\begin{align*}
\mathcal{A}_{kn}=\mathcal{A}_{}^k,\,\mathcal{B}_{kn}=\mathcal{B}_{}^k,
\end{align*}
and
\begin{align*}
G_{i+kn}(X)=G_i(\mathcal{A}_{}^k X\mathcal{B}_{}^k)=\mathcal{A}^{k} G_i(X) \mathcal{B}^{k},
\end{align*}
for any matrix $X$. Taking the matrix operator $G_i$ to both sides of Eq.~\eqref{Stein}, it follows that
\begin{align*}
G_{i}(\mathcal{X})=\mathcal{A} G_i(\mathcal{X}) \mathcal{B}+\sum\limits_{j=0}^{n-1}  G_{i+j}(C),
\end{align*}
we shall prove the equality \eqref{prove1} by using mathematical induction, where $\mathcal{X}$ is a solution of Eq.~\eqref{Stein}. To avoid confusion notations we rewritten the matrix operator $F_{f}$ in \eqref{prove1} as $F_{f,k}$.  For $k=1$ it is trivial. Assume that the conclusion of \eqref{prove1} is true for $k=k_0-1$, where $k_0$ is a positive integer. Then, together with Theorem~\ref{solve eq},
\begin{align*}
&F_{f,k_0}(\mathcal{X})=\dfrac{1}{k_0n}\left( \sum\limits_{i=0}^{n-1}\left( G_i(\mathcal{X})+(n-i-1)G_i(C)\right)+ \sum\limits_{i=0}^{(k_0-1)n-1}\mathcal{A}\left( G_i(\mathcal{X})\right.\right.\\
&\left.+((k_0-1)n-i-1)G_i(C)\right)\mathcal{B}
\left.+(k_0-1)\sum\limits_{i=0}^{n-1}G_i(C) \right)\\
&=\dfrac{1}{k_0}\left(F_{f}(\mathcal{X})+(k_0-1)F_{f}(\mathcal{X})\right)=F_{f}(\mathcal{X}).
\end{align*}
\end{proof}
Next, we consider a new linear matrix equation associate with a positive integer $k$ be defined as follows
\begin{align}\label{k1n}
Y=\left(\mathcal{A}^{k}A\right) f(Y)\left( B\mathcal{B}^{k}\right)+\mathcal{A}^{k} C\mathcal{B}^{k}+\sum\limits_{j=0}^{k-1}\mathcal{A}^{j} \mathcal{C}\mathcal{B}^{j},
\end{align}
which is obtained after a composition of $kn+1$ copies of $\mathcal{O}$ of the right hand side of Eq.~\eqref{eq}. On the other hand, it is straightforward to show that
\begin{align*}
\prod_{j=1}^i f^{(j-1)}(\mathcal{A}^{k}A)=\mathcal{A}^{ik} \prod_{j=1}^i f^{(j-1)}(A),\,\prod_{j=1}^i f^{(j-1)}(B\mathcal{B}^{k})=\prod_{j=1}^i f^{(j-1)}(B)\mathcal{B}^{ik}.
\end{align*}
Applying formulation \eqref{Ff} to the new Eq.~\eqref{k1n} with the substitution $(\mathcal{A},\mathcal{B},\mathcal{C})\rightarrow (\mathcal{A}^{k}A,B\mathcal{B}^{k},\mathcal{A}^{k} C\mathcal{B}^{k}+\sum\limits_{j=0}^{k-1}\mathcal{A}^{j} \mathcal{C}\mathcal{B}^{j})$, we have
\begin{align}\label{prove2}
F_{f}(Y)&=\dfrac{1}{n}\left(\sum\limits_{i=0}^{n-1} \mathcal{A}^{ik} G_i(Y) \mathcal{B}^{ik}+(n-i-1)\right.\nonumber\\
&\left.\left[\mathcal{A}^{k(i+1)}G_{i}(C)\mathcal{B}^{k(i+1)}+\sum\limits_{j=0}^{k-1} \mathcal{A}^{ki+j}G_{i}(\mathcal{C})\mathcal{B}^{ki+j}\right] \right)\nonumber\\
&=\dfrac{1}{n}\left(\sum\limits_{i=0}^{n-1} \mathcal{A}^{ik}\left[ G_i(Y) +(n-i-1)(\mathcal{A}^{k}G_{i}(C)\mathcal{B}^{k}+\sum\limits_{j=0}^{k-1} \mathcal{A}^{j}G_{i}(\mathcal{C})\mathcal{B}^{j})\right]\mathcal{B}^{ik}\right).
\end{align}
In the light of the characteristic of the original matrix operator $F_f$ in \eqref{3Ff}, it is not surprising that the following
conclusions are appeared.
\begin{Proposition}\label{ProY}
\begin{itemize}
\item[1.]
Suppose that $Y$ is a solution of the Stein matrix equation~\eqref{Stein}, then $F^{(j)}_{f}(Y)$ solve Eq.~\eqref{Stein} for any positive integer $j$.
\item[2.]
Suppose that $Y$ is a solution of the linear matrix equation~\eqref{eq}, then $F_{f}(Y)=Y$.
\end{itemize}
\end{Proposition}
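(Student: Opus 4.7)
The plan is to recognize formula \eqref{prove2} as the generic prescription \eqref{Ff} applied to the equivalent linear matrix equation \eqref{k1n}, whose coefficients $A'=\mathcal{A}^k A$, $B'=B\mathcal{B}^k$, $C'=\mathcal{A}^k C\mathcal{B}^k+\sum_{j=0}^{k-1}\mathcal{A}^j\mathcal{C}\mathcal{B}^j$ still interact with the same (anti-)linear period-$n$ operator $f$. The two product identities displayed just above \eqref{prove2} show that the $G$-product associated with this new triple collapses to $\mathcal{A}^{ik}\,G_i(\cdot)\,\mathcal{B}^{ik}$, so the template \eqref{Ff} evaluated on \eqref{k1n} is precisely \eqref{prove2}. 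Hence the whole machinery of Proposition~\ref{Pro1} transfers verbatim from the pair (\eqref{eq},\,\eqref{Stein}) to the pair (\eqref{k1n},\,its associated Stein matrix equation).

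For part~1, any solution $Y$ of \eqref{Stein} satisfies $Y=\mathcal{O}^n(Y)$ and therefore $Y=\mathcal{O}^{n(kn+1)}(Y)$. Writing the $n(kn+1)$-fold composition of $\mathcal{O}$ as the $n$-fold composition of the right-hand side of \eqref{k1n} shows that $Y$ also solves the Stein matrix equation attached to \eqref{k1n}. Proposition~\ref{Pro1} part~1 applied to this Stein equation then yields $F_f^{(j)}(Y)$ as a solution of the same equation for every positive integer $j$.

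For part~2, any solution $Y$ of \eqref{eq} is a fixed point of $\mathcal{O}$, so $Y=\mathcal{O}^{kn+1}(Y)$; unfolding the right-hand side, this says precisely that $Y$ solves \eqref{k1n}. Invoking Proposition~\ref{Pro1} part~2 for the linear matrix equation \eqref{k1n} delivers $F_f(Y)=Y$.

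The only technical obstacle I anticipate is justifying the two product identities that let the generic template \eqref{Ff} collapse to the concrete form \eqref{prove2}. Both reduce to the commutation $A\,f(\mathcal{A})=\mathcal{A}\,A$, which is obtained by expanding $\mathcal{A}=\prod_{j=1}^n f^{(j-1)}(A)$ together with $f^{(n)}(A)=A$, and then iterating to get $A\,f(\mathcal{A})^k=\mathcal{A}^k A$ (and symmetrically on the right).
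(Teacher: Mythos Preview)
Your transfer idea is sound, and part~2 goes through exactly as you describe: any solution of~\eqref{eq} is a fixed point of $\mathcal{O}$, hence solves~\eqref{k1n}, and Proposition~\ref{Pro1} part~2 applied to the triple $(A',B',C')$ gives $F_f(Y)=Y$.

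Part~1, however, has a genuine gap. The Stein matrix equation attached to~\eqref{k1n} is obtained by composing the right-hand side of~\eqref{k1n} $n$ times, i.e.\ it is the fixed-point equation $Z=\mathcal{O}^{\,n(kn+1)}(Z)$. This is \emph{weaker} than~\eqref{Stein}, which is $Z=\mathcal{O}^{\,n}(Z)$: every solution of~\eqref{Stein} solves the new Stein equation, but not conversely. So when you invoke Proposition~\ref{Pro1} part~1 for the pair (\eqref{k1n}, its Stein equation), the conclusion you obtain is only that $F_f^{(j)}(Y)$ satisfies $\mathcal{O}^{\,n(kn+1)}(F_f^{(j)}(Y))=F_f^{(j)}(Y)$, not that it satisfies~\eqref{Stein}. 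Even upgrading to Theorem~\ref{solve eq} does not help: that would give $F_f(Y)=\mathcal{O}^{\,kn+1}(F_f(Y))$, and since $\gcd(n,kn+1)=1$ there is no purely iterative way to descend from period $kn+1$ (or $n(kn+1)$) down to period $n$ for an affine map without an inverse.

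The paper closes this gap not by transfer but by direct computation: assuming only $Y=\mathcal{O}^{\,n}(Y)$, it rewrites the constant part $\Delta_i$ of~\eqref{prove2} using~\eqref{Stein} and~\eqref{knStein}, then computes $F_f(Y)-Af(F_f(Y))B$ term by term and shows the telescoping sum collapses to $C$. In other words, the paper proves the stronger fact that $F_f(Y)$ solves~\eqref{eq} itself, from which membership in~\eqref{Stein} and the iteration follow immediately. Your transfer argument cannot reach this stronger conclusion, because solving~\eqref{k1n} does not imply solving~\eqref{eq}.
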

\begin{proof}
For each $0\leq i\leq n$, let
\begin{align*}
\Delta_i=\mathcal{A}^{k}G_{i}(C)\mathcal{B}^{k}+\sum\limits_{j=0}^{k-1} \mathcal{A}^{j}G_{i}(\mathcal{C})\mathcal{B}^{j},
\end{align*}
if $Y$ satisfies the Eq.~\eqref{Stein} and therefore also solve Eq.~\eqref{knStein}, thus
\begin{align*}
\Delta_i=G_i(Y)-\mathcal{A}^{k}G_{i}Y\mathcal{B}^{k}+\mathcal{A}^{k}G_{i}(C)\mathcal{B}^{k},
\end{align*}
and it is easy to see that $G_i(Y)-G_{i+1}(Y)=G_i(C)$. Then,
\begin{align*}
&F_{f}(Y)-A f(F_{f}(Y)) B=\dfrac{1}{n}\left(\sum\limits_{i=0}^{n-1}  (n-i)G_{i+ikn}(Y)-(n-i-1)\left(G_{i+(i+1)kn}(Y)-G_{i+(i+1)kn}(C)\right)\right)\\
&-\dfrac{1}{n}\left(\sum\limits_{i=0}^{n-1}  (n-i)G_{i+1+ikn}(Y)-(n-i-1)\left(G_{i+1+(i+1)kn}(Y)-G_{i+1+(i+1)kn}(C)\right)\right)\\
&=\dfrac{1}{n}\left(\sum\limits_{i=0}^{n-1} (n-i)G_{i+ikn}(C)-(n-i-1)\left[G_{i+(i+1)kn}(C)-G_{i+(i+1)kn}(C)+G_{i+1+(i+1)kn}(C)\right]\right)\\
&=\dfrac{1}{n}\left(nG_0(C)\right)=C.
\end{align*}
This completes the first part. For the second part, if $Y$ satisfies the Eq.~\eqref{eq} and therefore also solve Eq.~\eqref{k1n}, an obvious argument gives
\begin{align*}
\Delta_i=G_i(Y)-\mathcal{A}^{k}G_{i+1}(Y)\mathcal{B}^{k}.
\end{align*}
Then,
\begin{align*}
F_{f}(Y)&=\dfrac{1}{n}\left(\sum\limits_{i=0}^{n-1}  G_{i+ikn}(Y)+(n-i-1)\left(G_{i+ikn}(Y)-G_{i+1+(i+1)kn}(Y)\right)\right)\\
&=\dfrac{1}{n}\left(\sum\limits_{i=0}^{n-1}  (n-i)G_{i+ikn}(Y)-(n-i-1)G_{i+1+(i+1)kn}(Y)\right)\\
&=Y,
\end{align*}
we have thus proved the second part.
\end{proof}
In summary of this section, it seem that two representations of $F_f$ are theoretically interesting.
However, compare the computational complexity of these formulations \eqref{Ff}, \eqref{prove1} and \eqref{prove2}.
 In practice, we usually evaluate the value of $F_f(\mathcal{X})$ by employing the original formulation \eqref{3Ff}.


\section{Several examples}
We shall now give three examples to demonstrate some results of the previous sections. In the first example, we explain our formulation with a linear and period-$n$ matrix operator $f$. The second example provide the necessary and sufficient conditions for the uniquely existence of the solution of a particular matrix equation. Finally, we develop some results obtained from the previous sections can be extended to a matrix equation having more terms on the right hand side of Eq.~\eqref{eq} in the last example.
\begin{example}
$P$ is a permutation matrix if each row of $P$ and each column of $P$ possesses
one 1 and zeros otherwise. Let $P\in\mathbb{R}^{m \times m}$ be the primary permutation matrix, that is,
\begin{align*}
P=\bb 0 & 1& 0 &\ldots&0\\
0 & 0& 1 &\ldots&0\\
\vdots & \vdots& \vdots &\vdots&\vdots\\
0 & 0& 0 &\ldots&1\\
1 & 0& 0 &\ldots&0\\
\eb,
\end{align*}
it is well known that $P^m=I$ and therefore $n=m$.
We define the matrix operator $f$ as follows,
\begin{align*}
f(X)=P^\top X P,
\end{align*}
where $X\in\mathbb{R}^{m \times m}$.
After direct manipulation yield
\begin{align*}
\mathcal{A}=(AP^\top)^n ,\,\mathcal{B}=(PB)^n,\, \mathcal{C}=\sum\limits_{i=0}^{n-1}(AP^\top)^i C (PB)^i.
\end{align*}
Then, for the solution $\mathcal{X}$ of the Stein matrix equation~\eqref{Stein}
\begin{align}\label{ex}
\mathcal{X}=(AP^\top)^n \mathcal{X} (PB)^n+\sum\limits_{i=0}^{n-1}(AP^\top)^i C (PB)^i.
\end{align}
It follows that
\begin{align*}
F_{f}(\mathcal{X})=\dfrac{1}{n}\left( \sum\limits_{i=0}^{n-1} \left(AP^\top)^i ( \mathcal{X} +(n-i-1)C )(PB)^i\right) \right).
\end{align*}
According to the Theorem~\ref{solve eq},  $F_{f}(\mathcal{X})$ is a solution of the following linear matrix equation
\begin{align}\label{ex1}
X=Af(X)B+C=AP^\top X PB+C.
\end{align}
Under the condition of uniquely solvable for Eq.~\eqref{ex1}, that is, the set of $\sigma(AP^\top)$ and  the set of $\sigma(B^\top P^\top)$ are reciprocal free. This condition is coincident with the condition of uniquely solvable for Eq.~\eqref{ex}. By the direct calculation, it can be shown that $F_{f}(\mathcal{X})$ is exactly equal to $\mathcal{X}$.
\end{example}

\begin{example}\label{ex2}
Let $f$ be the (real) anti-transpose operator. It is meant to be reflect $X$ over its anti-diagonal (which runs from top-right to bottom-left) to obtain $f(X)$. That is, given a $m$-square matrix $X=[x_{ij}]\in\mathbb{R}^{m \times m}$, then
\begin{align*}
f(X)=[x_{m+1-j\,,\,m+1-i}].
\end{align*}
Note that the effect of applying the anti-transpose operator $f$ to $X$ is never equivalent to a combination of any row and column operators apply to $X$ . It is clear that $f$ is anti-linear and period-2. Moveover, the set of $\sigma(A)$ is equal to the set of $\sigma(f(A))$ by consider the Schur decomposition of $A$. From Corollary~\ref{Coro}, the necessary condition for the existence of unique solution of the linear matrix equation is as follows,
 \begin{center}
 $\sigma(B^\top f^\top(A))$ and $\sigma(Af(B))$ are reciprocal free,
 \end{center}
 or equivalently,
  \begin{center}
 $\sigma(Af(B))$ is reciprocal free.
 \end{center}
Now,  we consider the necessary and sufficient conditions for the unique solvability of the solution $X$.
With the Kronecker product, this linear equation can be written as
the enlarged linear system
\begin{align}\label{KronD}
(I_{m^2}-(B^\top\otimes A)\mathcal{P}) \mbox{vec}(X)=\mbox{vec}(C),
\end{align}
where $\mathcal{P}$ is the Kronecker-like permutation matrix~\cite{Bernstein2009} which maps
$\mbox{vec}(X)$ into $\mbox{vec}(f(X))$, i.e.,
\begin{equation*}
\mathcal{P}=
\sum\limits_{1\leq i,j\leq m}e_{m+1-j}e_{m+1-i}^\top \otimes e_ie_j^\top,
\end{equation*}
where $e_i$ denotes the $i$-th column of the $m\times m$ identity
matrix $I_{m}$. Due to the specific structure of $\mathcal{P}$ and then analogous to the consequences of \cite[Lemma~2.2]{Chiang2013AAA}, we can show that
$$\sigma((B^\top\otimes A)\mathcal{P})=
\left \{\lambda_i , \pm\sqrt{\lambda_i\lambda_j} |
 \lambda_i, \lambda_j \in \sigma(A f(B))  = \left\{ \lambda_1,\ldots,\lambda_m\right\},
1\leq i <  j\leq m \right\}.$$ Here,  $\sqrt{z}$ denotes the principal square root of a complex number $z$.
We thus have the following solvability conditions, the linear matrix equation Eq.~\eqref{eq} with the anti-transpose operator $f$ is
uniquely solvable if and only if the following conditions are
satisfied:
\begin{itemize}
\item[(1)] The set of $\sigma(A f(B)) \setminus \{-1\}$ is reciprocal free.
\item[(2)] $-1$ can be an eigenvalue of the matrix $A f(B)$, but must be simple.
\end{itemize}
\end{example}

\begin{example}\label{ex3}
In the final example, we consider a more general linear matrix equation
\begin{align}\label{extend}
X=\sum\limits_{i=0}^{N-1} A_i f_i (X) B_i+C,
\end{align}
where $A_i,B_i$ and $C$ are $m\times m$ complex matrices for $i=0,1,\cdots,N-1$, and $N$ is a given positive integer. The family of operators $\{f_i\}$ are linear maps from $\mathbb{C}^{m \times m}$ to itself with \emph{homogeneous period-$n$}. The set of operators $\{ f_0,f_1,\cdots,f_{N-1}\}$ is called homogeneous period-$n$ if the following two conditions are both satisfied.
\begin{itemize}
\item[a.] For any $i\neq j$, $f_i$ and $f_j$ are commuting. That is, $f_i \circ f_j=f_j \circ f_i$.
\item[b.] $ f_0^{(i_0)}\circ f_1^{(i_1)}\circ\cdots\circ f_{N-1}^{(i_{N-1})}=I$ for arbitrary nonnegative integers $i_0,\cdots,i_{N-1}$ such that $i_0+\cdots+i_{N-1}=n$.
\end{itemize}
 We write the notation of the composition of operators $f_{k_0},\cdots,f_{k_{i-1}}$  by $f_{k_0,\cdots,k_{i-1}}$ and  introduce the important quantities $K_i(X)$ as follows,
\begin{align*}
 &K_i(X)=\sum\limits_{0\leq k_0,\ldots,k_{i-1} \leq N-1} \prod_{j=1}^i f_{k_0,\cdots,k_{j-2}}(A_{k_{j-1}}) \left[f_{k_0,\cdots,k_{j-1}}(X)\right] \prod_{j=1}^i f_{k_{0},\cdots,k_{i-j-1}}(B_{k_{i-j}})\\
\end{align*}
for any positive integer $1\leq i \leq n$ and arbitrary $m$-square matrix $X$ and we define $K_0(X)=X$. 
Note that $K_{i+j}(X)=K_i(K_j(X))=K_j(K_i(X))$ like the behavior of the operator $G_i$.
 After a composition of $n$ copies of the right hand side of Eq.~\eqref{extend}, we can be transformed Eq.~\eqref{extend} into the generalized Stein matrix equation,
\begin{align}\label{exstein}
\mathcal{X}=K_n(\mathcal{X})+\sum\limits_{i=0}^{n-1} K_i(C).
\end{align}
Similarly, we let $\mathcal{X}$ be the solution of Eq.~\eqref{exstein} and
\begin{align*}
F_{f}(\mathcal{X})=\dfrac{1}{n}\left( \sum\limits_{i=0}^{n-1} \left(K_i(\mathcal{X})+(n-i-1)K_i(C)\right) \right)
\end{align*}
associate with Eq.~\eqref{ex}. It is analogous to show that $F_{f}(\mathcal{X})$ is also a solution of Eq.~\eqref{extend} and thus also solve Eq.~\eqref{exstein}. A special
case is that the family of operators $\{f_i\}$ are the same $f\equiv f_{0}=\cdots=f_{N-1}$. Then,
\begin{align*}
 K_i(\mathcal{X})&=\sum\limits_{0\leq k_0,\ldots, k_{i-1} \leq N-1}  \prod_{j=1}^i f^{(j-1)}(A_{k_{j-1}}) \left[f^{(j)}(\mathcal{X})\right] \prod_{j=1}^i f^{(i-j)}(B_{k_{i-j}}),
\end{align*}
which is exactly equal to the matrix $G_i(\mathcal{X})$ when $N=1$.
\end{example}

%
%
%
%

\section{Concluding remark}
In this paper, we are mainly interested how the solution of Eq.~\eqref{eq} can be found by computed the solution of a Stein matrix equation and characterize this transformation. We suggest an approach for solving the general solution of Eq.~\eqref{eq}. A formulation of a class of the linear matrix equation is already available in the recent work \cite{Zhou2011}. The new framework is presented in this paper to calculate the solution of Eq.~\eqref{eq} with a  general class of linear matrix operator $f$. Two expressions \eqref{prove1} and \eqref{prove2} of this formulation corresponding to other equivalent linear matrix equations are supplied. At the end, we provide two special matrix operator $f$ to perform our theory and the similar result of a generalization linear matrix equation is also mentioned.

For future work, some interesting problems are treated. The first one is that how the relationship between the conditions of uniquely solvable of Eq.~\eqref{eq} and the conditions of uniquely solvable of Eq.~\eqref{Stein}?
The recent examples in the literature~\cite{Zhou2011,Chiang2013AAA} and Example~\ref{ex2} tell us that they are almost the same. What properties of the matrix operator $f$ such that the conditions of uniquely solvable of Eq.~\eqref{eq} and the conditions of uniquely solvable of Eq.~\eqref{Stein}
 \begin{center}
 $\sigma(\mathcal{A})$ and $\sigma(\mathcal{B})$ are reciprocal free,
 \end{center}
are matched and vice versa? It seem possible that two conditions are the same arising from the relationship between the shape of $\mathcal{S}_1$ and the shape of $\mathcal{S}_2$. This will be further explored in the future. The other challenge problem is that we try to solve Example~\ref{ex3} without the assumption of homogenous periodic operators. The assumption of the open problem has only required the periodically of each operator $f_i$. All these questions are under investigation and will be reported elsewhere.
\section*{Acknowledgement}
This research work is partially supported by the National Science Council and
the National Center for Theoretical Sciences in Taiwan.

%

\end{document}